\newtheorem{theorem}{Theorem}[section]
\newtheorem{corollary}[theorem]{Corollary}
\newtheorem{lemma}[theorem]{Lemma}
\newtheorem{proposition}[theorem]{Proposition}
\theoremstyle{definition}
\newtheorem{definition}[theorem]{Definition}
\newtheorem{remark}[theorem]{Remark}
\newtheorem{example}[theorem]{Example}
\newtheorem{question}[theorem]{Question}
\numberwithin{equation}{section}
\newcommand*{\Le}{\leqslant}
\newcommand*{\Ge}{\geqslant}
\newcommand{\inp}[2]{\langle{#1},\,{#2} \rangle}
\newcommand{\beqn}{\begin{eqnarray*}}
\newcommand{\eeqn}{\end{eqnarray*}}
\newcommand{\beq}{\begin{eqnarray}}
\newcommand{\eeq}{\end{eqnarray}}
\newcommand*{\wot}[1]{\mathscr W_{#1}}
\def \H{\mathcal{H}}
\def \D{\mathbb{D}}
\def \C{\mathbb{C}}
\begin{document}


\baselineskip=17pt


\title[The reflexivity of hyperexpansions]{The reflexivity of hyperexpansions and their Cauchy dual operators}

\author[S. Podder]{Shubhankar Podder}
\address{School of Mathematics\\ Harish-Chandra Research Institute, HBNI\\
Chhatnag Road, Jhunsi\\
Prayagraj (Allahabad) 211019, India}
\email{shubhankar.podder@gmail.com}

\author[D.K. Pradhan]{Deepak Kumar Pradhan}
\address{Statistics and Mathematics Unit\\ Indian Statistical Institute\\
8th Mile, Mysore Road\\
Bangalore 560059, India}
\email{deepak12pradhan@gmail.com}

\date{}

\begin{abstract}
We discuss the reflexivity of hyperexpansions and their Cauchy dual operators. In particular, we show that any cyclic completely hyperexpansive operator is reflexive. We also establish the reflexivity of the Cauchy dual of an arbitrary $2$-hyperexpansive operator. As a consequence, we deduce the reflexivity of the so-called Bergman-type operator, that is, a left-invertible operator $T$ satisfying the inequality
$TT^* + (T^*T)^{-1} \Le 2 I_{\mathcal H}.$
\end{abstract}

\subjclass[2010]{Primary 47B20; Secondary 47A16}

\keywords{completely hyperexpansive operator, Dirichlet-type operator, Cauchy dual, reflexivity}

\maketitle

\section{Introduction}
Completely hyperexpansive operators were introduced independently by Aleman \cite{A} and Athavale \cite{At}. It has been extensively studied by several authors (see, for example, \cite{SA}, \cite{JS}, \cite{J02}, \cite{ACJS2}, \cite{ACJS1}). It is worth mentioning that the class of completely hyperexpansive weighted shifts is antithetical to that of contractive subnormal weighted shifts, in the sense that the weights of its Cauchy dual are
exactly the weights of contractive subnormal weighted shifts (see \cite[Remark 4]{At}). The present paper investigates the  class of hyperexpansions with a focus on reflexivity. It is to be noted that by a result of Olin and Thompson \cite[Theorem 3]{OT}, any subnormal operator is reflexive. Although the Cauchy dual of a
completely hyperexpansive operator is not necessarily subnormal (refer to \cite[Examples 6.6 and 7.10]{ACJS1}), surprisingly, Proposition \ref{cauchy-dual-ref} ensures the reflexivity of the Cauchy dual of any $2$-hyperexpansive operator.

We set below the notations used throughout this text. Let $\mathbb N$ denote the set of positive integers. Let $\mathbb C$ be the complex plane, while $\mathbb D$ stand for the open unit disk in $\mathbb C$ centered at the origin. All the Hilbert spaces to occur below are complex and separable. For Hilbert spaces $\H$, we use
$B(\H)$ to denote the algebra of bounded linear operators on $\H$. Unless stated otherwise, the Hilbert spaces are infinite-dimensional. The kernel, range, adjoint and spectrum of an operator $T\in B(\H)$ are denoted by  $\mbox{ker}\, T$, $T(\H)$, $T^*$ and $\sigma(T)$, respectively. The symbol $I_{\H}$ is reserved for the identity operator of $B(\H)$. If $F$ is a subset of $\H$, the closure of $F$ is denoted by $\overline{F}$, while the closed
linear span of $F$ is denoted by $\bigvee \{x : x \in F\}$. If $\mathcal N$ is a finite-dimensional subspace of $\mathcal H$, then $\dim \mathcal N$ denotes the vector space dimension of $\mathcal N.$

Let $\mathscr W$ be a subalgebra of $B(\H)$ containing $I_{ \H}$, and let $\mbox{Lat}\,\mathscr W$ be the set of all closed linear subspaces of $\H$ that are invariant under every operator in $\mathscr W$. For $T \in B(\H)$, let $\mbox{Lat}\,T$ denote the set of all  closed linear subspaces of $\H$ that are invariant under $T$.  The set
\begin{center}
AlgLat\,$\mathscr W: =\{T\in {B}(\H): {\rm Lat}\,\mathscr W \subseteq {\rm Lat}\,T\}$
\end{center}
is a subalgebra of $B(\H)$ which contains $\mathscr W$ and WOT-closed. We say that $\mathscr W$ is \emph{reflexive} if $$\mathscr W = \mbox{AlgLat}\,\mathscr W.$$ For $T \in {B}(\H)$, let $\mathscr W_{T}$ (resp. $\mathscr A_{T}$) stand for the WOT-closed (resp. weak*-closed) subalgebra of $B(\H)$ generated by $T$ and $I_{ \H}$. We say $T$ is {\it reflexive} if $\mathscr W_T$ is reflexive. It is easy to see that $\mathscr W_T$ is reflexive if $\mathscr A_T$ is reflexive, and in that case $\mathscr W_T$ equals $\mathscr A_T$. An algebra $\mathscr W$ is said to be {\it super-reflexive} if any unital WOT-closed subalgebra of $\mathscr W$ is reflexive and we call an operator $T$ {\it super-reflexive} if $\mathscr W_T$ is super-reflexive.

Let $\mathcal S$ be a weak*-closed subspace of $B(\H)$.  We say that $\mathcal S$ admits the {\it property }($\mathbb A_1$) if for any weak*-continuous linear functional $\phi$ on $\mathcal S$ there exist vectors $f, g$ in $\H$ such that $\phi (S)=\inp{Sf}{g}$ for all $S \in \mathcal S$. Further, given $r \Ge 1$, if for every weak*-continuous linear functional $\phi$ on $\mathcal S$ and $s>r$ there exist vectors $f, g$ in $\H$ such that $\phi(S)=\inp{Sf}{g}$ for all $S \in \mathcal S$ and $\|f\| \|g\| \Le s \| \phi \|$, then we say that $\mathcal S$ has the {\it property}($\mathbb A_1(r)$). For $r \Ge 1$, we say $T \in B(\H)$ admits the {\it property} ($\mathbb A_1(r)$) (resp. ($\mathbb A_1$)) if $\mathscr A_T$ satisfies the property ($\mathbb A_1(r)$) (resp. ($\mathbb A_1$)). A comprehensive account of these properties and related topics can be found in \cite{BFP}.

\begin{definition}
Let $\H$ be a Hilbert space and $T \in  B(\H)$. Define $B_{n}(T)$ as the following
\begin{equation*}\label{equ}
  B_n(T):=\sum_{p=0}^{n}(-1)^p {{n}\choose {p}}T^{*p}T^p.
 \end{equation*}
 \begin{enumerate}
 \item[(i)]  $T$ is said to be  {\it completely
hyperexpansive} if $B_n(T) \Le 0$ for all $n \in \mathbb N$.
\item[(ii)] For $m \in \mathbb N$,  $T$ is said to be {\it $m$-hyperexpansive} if $B_n(T) \Le 0$ for $n=1,\ldots, m$.
\item[(iii)] For $m \in \mathbb N$, $T$ is said to be {\it $m$-isometric} if $B_m(T)=0$.
 \end{enumerate}
\end{definition}
In case $m=1$, an $1$-hyperexpansive (resp. $1$-isometry) operator $T$ is said to be {\it expansive} (resp. {\it isometry}). The notion of $2$-hyperexpansive operators first appeared in the paper \cite{R} of Richter. He proved in \cite[Lemma 1]{R} that for any $T$ if $B_2(T)\Le 0$, then $B_1(T)\Le 0.$ Hence any $2$-isometric operator $T$ is indeed $2$-hyperexpansive. Also, it is well known that any $2$-isometric operator is completely hyperexpansive (see \cite[Remark 1.3]{SA}). In \cite{S}, Shimorin referred to $2$-hyperexpansive operators as concave operators.

Following \cite{S}, we say that $T \in B(\H)$ is {\it analytic} if $\cap_{n \in \mathbb N}T^n( \H)=\{0\}$. An operator $T$ is called {\it completely non-unitary} if there is no reducing subspace
$\mathcal N$ of $T$ such that $T|_{\mathcal N}$ is a unitary. Note that any analytic operator is completely non-unitary. We say $T$ is {\it finitely multicyclic} if there is a finite, linearly independent subset $W$ of $\H$ such that
\beqn
\H = \bigvee {\{p(T)h : p\in \C[z], h \in W \}}.
\eeqn
In particular, if cardinality of $W$ is 1, then we say that the operator $T$ is {\it cyclic}.


We now briefly recall the
notion of Cauchy dual of a left-invertible
operator, which was first investigated by Shimorin in \cite{S}. An operator $T \in B(\H)$ is {\it left-invertible} if and only if $T^*T$ is invertible, which holds if and only if $T$ is bounded from below. This allows us to define the
{\it Cauchy dual} $T'$ of a left-invertible operator $T$ by $$T':=T(T^*T)^{-1}.$$  For a left-invertible operator $T$, the following can be
seen easily:
\begin{align} \label{equ}
 \left.
  \begin{minipage}{40ex}
\beqn
& & T'^*T=I_{\H},\, T^*T'=I_{\H},\\
& & (T')'=T,\, T'^*T'=(T^*T)^{-1}.
  \eeqn
 \end{minipage}
   \right\}
\end{align}
 A left-invertible operator $T$ is said to be {\it Bergman-type operator} if
 $$TT^* + (T^*T)^{-1} \Le 2 I_{\H}.$$
We now give an outline of this paper. In Section 2, we establish the reflexivity of cyclic completely hyperexpansive operators (Proposition \ref{complete-hyp-ref}). As an application, we deduce that any cyclic $2$-isometry is reflexive. This does not hold in general for cyclic $m$-isometries,  $m>2$. We also discuss some instances in which finitely multicyclic completely hyperexpansive operators are reflexive. In Section 3, we show that the Cauchy dual of $2$-hyperexpansions are reflexive (Proposition \ref{cauchy-dual-ref}). Among various applications, we prove the reflexivity of the Bergman-type operators. Also, we deduce the fact that the Cauchy dual of an $m$-isometry is reflexive for $m=1, 2$; but the result is not true in general for $m > 2.$  However, in case of a contractive or expansive, $m$-isometry its Cauchy dual is always reflexive. Further, we establish the reflexivity of certain power bounded left-invertible operators and study some cases where left-invertible operators and their Cauchy duals are reflexive. We end this paper with some possible directions for further research.

\section{Completely hyperexpansive operators}
In this section, we address the question of the reflexivity for a completely hyperexpansive operator. We focus our attention to the class of cyclic completely hyperexpansive operators primarily due to a result of Aleman, which
asserts that any cyclic analytic completely hyperexpansive operator is unitarily equivalent to the multiplication
operator $M_z$ on a Dirichlet-type space $D_{\mu}$ for some finite positive measure $\mu$ supported on $\overline{\D}$ (see \cite[IV, Theorem 2.5]{A}). We now recall the definition of $D_{\mu}$.
Let $f: \D \rightarrow \C$  be an analytic function of the form $f(z)=\sum_{n=0}^{\infty} \hat{f}(n) z^n$. For a finite positive measure
$\mu$ on $\overline{\D}$, define
 \beqn
 \|f\|^2_{\mu} := \sum_{n=0}^{\infty} |\hat{f}(n)|^2 + \int_{\D} |f'(\zeta)|^2 U_{\mu}(\zeta) dm(\zeta),
 \eeqn
 where $dm$ is the normalized area measure on $\D$, and
 \beqn
 U_{\mu}(\zeta) :=\int_{\D}\mbox{log} \biggl| \frac{1-\overline z \zeta}{\zeta - z}\biggr| \frac{d\mu(z)}{1-|z|^2}+ \int_{\partial \D}
 \frac{1-|\zeta|^2}{|z-\zeta|^2} d\mu(z), \quad \zeta \in \D.
 \eeqn
The Dirichlet-type space $D_{\mu}$ is defined as
\beqn D_{\mu} := \{f: \D \rightarrow \C ~|~ f ~\mbox{is~analytic function such that~}\|f\|_{\mu} < \infty\}. \eeqn

One of the ingredients in obtaining reflexivity of  cyclic completely hyperexpansions is a fact related to the multiplier algebra of scalar-valued reproducing kernel Hilbert spaces. We briefly discuss reproducing kernel Hilbert spaces and some related topics. Most of the fact mentioned below pertaining to reproducing  kernel Hilbert spaces can be found in \cite{PR}.

Let $X$ be a set and $\mathcal{E}$ be a Hilbert space. Let $\H$  be a Hilbert space of $\mathcal E$-valued functions on $X$. We say $\H$ is an {\it $\mathcal{E}$-valued  reproducing  kernel Hilbert space (RKHS) on $X$} provided for every $x \in X$, the evaluation map $E_x :\H \rightarrow \mathcal E$ given by $E_x(f)=f(x)$ is  bounded. The map $k : X\times X \rightarrow B(\mathcal{E})$ defined by $k(x,y):= E_xE_y^*$ is called the {\it $B(\mathcal{E})$-valued reproducing kernel}. In this case, for any $\eta \in \mathcal E$ and $x \in X$, we have $k(\cdot, x)\eta= E_x^*(\eta)$. The space $\mathcal H$ admits the reproducing property:
\beq \label{reproducing-property}
\inp{f(x)}{\eta}_{\mathcal{E}} =\inp{ f} {k(\cdot,x)\eta}_{\mathcal H} \mbox{ for } f \in \H ,\,  x\in X \mbox{ and  } \eta \in \mathcal{E}.
\eeq
We omit the suffix  of the inner product in future usages, whenever the context is clear. In case  $\mathcal{E} = \C$, we say $\H$ is a {\it scalar-valued RKHS} and $k$ to be a {\it scalar-valued reproducing kernel.}

Let $\H$ be a $\mathcal{E}$-valued RKHS on a set $X$. A map $\phi: X \rightarrow B(\mathcal{E})$ is said to be a multiplier of $\H$ if $\phi f \in \H$ for every $f\in \H$, where $\phi f (x) := \phi(x)f(x)$ for $x\in X$. Let $\mathcal M(\H)$ denote the set of all
multipliers of $\H$. As an application of the closed graph theorem, every multiplier $\phi$ induces a bounded linear operator $M_{\phi}:\H\rightarrow \H$ given by $M_{\phi} f=\phi f$, $f \in \H$. It is well known that for any scalar-valued RKHS $\H$, the algebra $\mathscr M(\H):=\{M_\phi: \phi \in \mathcal M(\H) \}$ is
reflexive. Also, in case of an $\mathcal{E}$-valued RKHS, it follows from \cite[Corollary 2.2]{Ba} that $ \mathscr M(\H)$ is reflexive provided the evaluation map $E_x : \H \rightarrow \mathcal{E}$ is either onto or zero for every $x \in X$. It turns out that if $\mathcal E$ is finite-dimensional, then $ \mathscr M(\H)$ is reflexive irrespective of the aforementioned assumptions. The argument is routine and we include a proof for completeness sake.
\begin{lemma}\label{multiplier-reflexivity}
Let $X$ be a set and $\mathcal{E}$ be a finite-dimensional Hilbert space. Let $\H$ be an $\mathcal E$-valued reproducing kernel Hilbert space on $X$ associated with a $B(\mathcal E)$-valued reproducing kernel $k$. Then $\mathscr M(\H)$ is reflexive.
\end{lemma}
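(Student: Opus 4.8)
The plan is to establish the only nontrivial inclusion, namely $\alglat{\mathscr M(\H)} \subseteq \mathscr M(\H)$, since the reverse inclusion $\mathscr M(\H) \subseteq \alglat{\mathscr M(\H)}$ holds by the very definition of $\alglat{}$. First I would record that the ranges of the adjoints of the evaluation maps are coinvariant for the whole multiplier algebra. A direct computation using the reproducing property \eqref{reproducing-property} shows that for every $\phi \in \mathcal M(\H)$, $x \in X$ and $\eta \in \mathcal E$ one has $M_\phi^* E_x^* \eta = E_x^* \phi(x)^* \eta$; equivalently, $M_\phi^*$ maps $\mathcal K_x := \ran E_x^*$ into itself. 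Consequently $\mathcal K_x^\perp = (\ran E_x^*)^\perp = \ker E_x$ is invariant under every $M_\phi$, so $\ker E_x \in \mathrm{Lat}\, \mathscr M(\H)$ for each $x \in X$.

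Now take $T \in \alglat{\mathscr M(\H)}$. By the previous step $T$ leaves each $\ker E_x$ invariant, hence $T^*$ leaves $\mathcal K_x = (\ker E_x)^\perp$ invariant, i.e. $T^* \mathcal K_x \subseteq \mathcal K_x$. This is where finite dimensionality enters. Writing $\mathcal E_x := \ran E_x = (\ker E_x^*)^\perp$, which is closed because $\dim \mathcal E < \infty$, the restriction $E_x^*|_{\mathcal E_x}$ is a linear bijection of $\mathcal E_x$ onto $\mathcal K_x$; let $R_x : \mathcal K_x \to \mathcal E_x$ denote its (automatically bounded) inverse. I then define $\psi(x) \in B(\mathcal E)$ by setting $\psi(x)^* := R_x\, T^* E_x^*$, which is a well-defined map of $\mathcal E$ into $\mathcal E_x \subseteq \mathcal E$ because $\ran(T^* E_x^*) \subseteq T^*\mathcal K_x \subseteq \mathcal K_x$.

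The construction is arranged precisely so that $E_x^* \psi(x)^* = T^* E_x^*$ for every $x$, since $E_x^* R_x$ is the identity on $\mathcal K_x$, which contains $\ran(T^* E_x^*)$. Dualizing this identity through the reproducing property \eqref{reproducing-property} then gives, for all $f \in \H$, $x \in X$ and $\eta \in \mathcal E$,
\[ \inp{(Tf)(x)}{\eta} = \inp{Tf}{E_x^*\eta} = \inp{f}{T^* E_x^* \eta} = \inp{f}{E_x^* \psi(x)^* \eta} = \inp{\psi(x) f(x)}{\eta}, \]
so that $(Tf)(x) = \psi(x) f(x)$ for every $x \in X$. In particular $\psi f = Tf \in \H$ for all $f \in \H$, so $\psi$ is a multiplier of $\H$ and $T = M_\psi \in \mathscr M(\H)$, which completes the argument. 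Note that no separate boundedness check on $x \mapsto \psi(x)$ is required, since $M_\psi$ is bounded by the closed graph theorem once $\psi$ is known to be a multiplier.

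The one place that genuinely requires care, and the reason the general $\mathcal E$-valued statement needs an extra hypothesis in \cite{Ba}, is the possibility that $E_x$ is \emph{neither} surjective \emph{nor} zero, so that $\ran E_x$ is a proper nonzero subspace of $\mathcal E$ and $E_x^*$ is not invertible on all of $\mathcal E$. I expect this to be the main obstacle. Finite dimensionality resolves it cleanly: it guarantees that $\ran E_x$ is closed and that the algebraic inverse $R_x$ of $E_x^*|_{\mathcal E_x}$ is automatically bounded, which is exactly what is needed to define $\psi(x)$ and to push the verification above through for every $x$.
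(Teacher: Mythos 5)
Your proof is correct and follows essentially the same route as the paper's: both establish that the spaces $\ran E_x^*=\{k(\cdot,x)\eta:\eta\in\mathcal E\}$ are coinvariant for $\mathscr M(\H)$, deduce $T^*\ran E_x^*\subseteq\ran E_x^*$ for $T\in\alglat{\mathscr M(\H)}$, and use finite dimensionality of $\mathcal E$ to extract a bounded $\psi(x)$ with $E_x^*\psi(x)^*=T^*E_x^*$, whence $T=M_\psi$. Your explicit construction $\psi(x)^*=R_xT^*E_x^*$ via the bounded inverse of $E_x^*|_{\ran E_x}$ is just a cleaner packaging of the paper's choice of the unique preimage $\tilde\eta\in E_x(\H)$.
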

\begin{proof}
Given a multiplier $\phi \in \mathcal M (\H),$ it is easy to verify that
$$M_\phi^*k(\cdot, x)\eta=k(\cdot,x)\phi(x)^*\eta, \quad x\in X, ~\eta \in \mathcal{E}.$$
In particular, for each  $x\in X$, $V_x:=\{k(\cdot,x) \eta: \eta \in \mathcal E\} \in \mbox{Lat}\,M_\phi^*$ for all $\phi \in \mathcal M(\H).$ Let $T\in \mbox{AlgLat}\, \mathscr M(\H)$, then $ \mbox{Lat}\,M_\phi^* \subseteq \mbox{Lat}\, T^*$ for every $\phi \in \mathcal M(\H)$ and hence $T^*(V_x) \subset V_x$ for all $x\in X$.
Given $x\in X$, it follows that for every $\eta \in \mathcal E$  there exist $\tilde{\eta}\in \mathcal E$ such that $T^*k(\cdot,x)\eta=k(\cdot,x)\tilde{\eta}$. Now since $\mathcal E$ is
finite-dimensional, there exist a unique $\tilde{\eta} \in E_x(\H)$ such that $T^*k(\cdot,x)\eta = k(\cdot,x)\tilde{\eta}$.
Hence for each $x\in X$, we define
$\tilde{\psi}(x):\mathcal E\rightarrow \mathcal E$ by $\tilde{\psi}(x)(\eta)
:=\tilde{\eta}$.  It is easy to verify that
$\tilde{\psi}(x)$ is linear, and since $\mathcal E$ is finite-dimensional $\tilde{\psi}(x)$ is also bounded. Let
$\psi(x):=\tilde{\psi}(x)^*$, then  for any $f\in \H$, $\eta \in \mathcal{E}$ and $x\in X$,
 \beqn
 \inp{(Tf)(x)}{\eta} &\overset{\eqref{reproducing-property}}=&\inp{ Tf} {k(\cdot,x)\eta}\\
 &=& \inp{f} {T^*k(\cdot,x)\eta}\\
 &=& \inp{f}{k(\cdot,x)\psi(x)^*\eta} \\
 &\overset{\eqref{reproducing-property}}=& \inp{f(x)}{\psi(x)^*\eta}\\
 &=& \inp{\psi(x) f(x)}{\eta}.
 \eeqn
 Hence $T= M_{\psi} \in \mathscr M(\H)$, which completes the proof.
\end{proof}

We recall the definition of complete Nevanlinna-Pick kernel form \cite{S1}. Let $\H$ be a scalar-valued RKHS with a scalar-valued reproducing kernel $k$ on a set $X.$ We say that $k$ has {\it complete Nevanlinna-Pick property} if for $x,y \in X$,
\beqn
k(x,y)-\frac{k(x,w)k(w,y)}{k(w,w)}=B_{w}(x,y)k(x,y),
\eeqn
for some $w\in X$ such that $k(w,w)\neq 0$ and some positive semidefinite function $B_{w}(x,y)$ with $|B_{w}(x,y)| <1$.

In the following proposition we record the known facts \cite[Proposition 2.5(5)]{HN}, \cite[Proposition 2.04, 2.055, 2.09]{BFP}, \cite[Corollary 5.3]{DH}, pertaining to the property $(\mathbb A_1)$ for ready reference.

\begin{proposition}\label{dirct-sum}
The following statements hold.
\begin{enumerate}
\item[(i)] If $\mathscr W$ is a unital WOT-closed subalgebra and $\mbox{AlgLat}\,\mathscr W$ has property $(\mathbb A_1)$, then $\mathscr W$ is super-reflexive.
\item[(ii)] If $\mathcal S$ is any weak*-closed subspace with property $(\mathbb A_1(r))$ for some $r\Ge 1$, and $\mathcal T$ is a weak*-closed subspace of $\mathcal S$, then $\mathcal T$ has property $(\mathbb A_1(r))$.
 \item[(iii)] For some $r\Ge 1$, a direct sum of two unital weak*-closed subalgebras has property $(\mathbb A_1(r))$ if each summand has property $(\mathbb A_1(r))$.
\item[(iv)] If $\mathscr A$ is a unital weak*-closed subalgebra of $B(\H)$ that has property $(\mathbb A_1(r))$ for some $r \Ge 1$, then $\mathscr A$ is WOT-closed; and the weak* and weak operator topologies coincide on $\mathscr A$. Moreover, if $L$ is an invertible operator in $B(\H)$, then $L^{-1}\mathscr A L:=\{L^{-1}AL: A\in \mathscr A\}$ is a unital weak*-closed subalgebra that has property $(\mathbb A_1(r'))$ for some $r' \Ge 1$.
\item[(v)] The multiplier algebra $\mathscr M(\mathcal H)$ of every complete Nevanlinna-Pick kernel has property $(\mathbb A_1(1))$.
\end{enumerate}
\end{proposition}
It is known that any abelian von Neumann algebra has property $(\mathbb A_1(1))$ (see \cite[Proposition 60.1]{CO1}). Hence, by Proposition \ref{dirct-sum}(ii), for any normal operator $T$, $\mathscr A_{T}$ has property
$(\mathbb A_1(1))$. The following is the main result of this section.

\begin{proposition}\label{complete-hyp-ref}
Let $T \in B(\H)$ be completely hyperexpansive with dimension of $\ker T^*$ being 1. Then $T$ is reflexive and admits property $(\mathbb A_1(1))$.
\end{proposition}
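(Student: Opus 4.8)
The plan is to reduce matters to Aleman's model theorem by first splitting off a unitary summand and then feeding the two pieces into the direct-sum machinery of Lemma~\ref{dirct-sum}. First I would invoke a Wold-type decomposition for completely hyperexpansive operators (see \cite{S}, \cite{A}): since $T$ is expansive it is bounded below, hence left-invertible, and $\H$ splits as a $T$-reducing orthogonal sum $\H = \H_u \oplus \H_a$ on which $T = U \oplus A$, where $U$ is unitary and $A$ is an analytic completely hyperexpansive operator. Because $U$ is onto, $\ker U^* = \{0\}$, so $\ker T^* = \ker A^*$ and therefore $\dim \ker A^* = 1$. As $A$ is analytic completely hyperexpansive it enjoys the wandering subspace property of \cite{S}, and a one-dimensional wandering subspace $\ker A^*$ forces $A$ to be cyclic. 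Aleman's theorem \cite[IV, Theorem 2.5]{A} then identifies $A$, up to unitary equivalence, with the multiplication operator $M_z$ on a Dirichlet-type space $D_\mu$ for a suitable finite positive measure $\mu$ on $\overline{\D}$.

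Next I would analyse the two summands separately. The reproducing kernel of $D_\mu$ has the complete Nevanlinna--Pick property (cf. \cite{S1}); from this one extracts both that the weak*-closed algebra generated by $M_z$ is the full multiplier algebra, $\mathscr A_A = \mathscr M(D_\mu)$, and that $\mathscr M(D_\mu)$ has property $(\mathbb A_1(1))$. Since $D_\mu$ is a scalar-valued RKHS, Lemma~\ref{multiplier-reflexivity} (with $\mathcal E = \C$) shows $\mathscr M(D_\mu)$ is reflexive; combined with $(\mathbb A_1(1))$, so that weak* and WOT coincide by Lemma~\ref{dirct-sum}(iv), this gives $\mathscr W_A = \mathscr A_A = \mathscr M(D_\mu)$ and hence $\mbox{AlgLat}\,A = \mathscr A_A$. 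For the unitary summand, $U$ is normal, so $\mathscr A_U$ has property $(\mathbb A_1(1))$ by the remark following Lemma~\ref{dirct-sum}, and $U$ is reflexive (being subnormal, by \cite[Theorem 3]{OT}), whence $\mbox{AlgLat}\,U = \mathscr A_U$.

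With both pieces understood, the property $(\mathbb A_1(1))$ for $T$ follows quickly: $p(T) = p(U) \oplus p(A)$ exhibits $\mathscr A_T$ as a weak*-closed subalgebra of $\mathscr A_U \oplus \mathscr A_A$, the latter has $(\mathbb A_1(1))$ by Lemma~\ref{dirct-sum}(iii), and Lemma~\ref{dirct-sum}(ii) transfers the property to $\mathscr A_T$. For reflexivity I would apply Lemma~\ref{dirct-sum}(i) to $\mathscr W = \mathscr W_T$, so it suffices to show that $\mbox{AlgLat}\,\mathscr W_T = \mbox{AlgLat}\,T$ has property $(\mathbb A_1)$. Since $\H_u$ and $\H_a$ reduce $T$, any $S \in \mbox{AlgLat}\,T$ leaves both invariant and restricts to operators in $\mbox{AlgLat}\,U$ and $\mbox{AlgLat}\,A$; hence $\mbox{AlgLat}\,T \subseteq \mbox{AlgLat}\,U \oplus \mbox{AlgLat}\,A = \mathscr A_U \oplus \mathscr A_A$. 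As this ambient algebra has $(\mathbb A_1(1))$ and weak* coincides with WOT on it, $\mbox{AlgLat}\,T$ is a weak*-closed subspace of it, so Lemma~\ref{dirct-sum}(ii) yields $(\mathbb A_1(1))$, a fortiori $(\mathbb A_1)$, for $\mbox{AlgLat}\,T$. Lemma~\ref{dirct-sum}(i) then makes $\mathscr W_T$ super-reflexive, in particular reflexive.

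The main obstacle I expect is the analysis of the analytic summand, namely establishing that the Dirichlet-type kernel is a complete Nevanlinna--Pick kernel and extracting from this both the identification $\mathscr A_A = \mathscr M(D_\mu)$ and the quantitative property $(\mathbb A_1(1))$ of the multiplier algebra; everything else reduces to it. The Wold-type splitting and the bookkeeping with reducing subspaces are, by comparison, routine once Lemma~\ref{dirct-sum} and Lemma~\ref{multiplier-reflexivity} are in hand.
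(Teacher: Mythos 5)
Your proposal follows the same skeleton as the paper's proof: Shimorin's Wold-type decomposition $T=U\oplus A$, cyclicity of the analytic part from $\dim\ker T^*=1$, Aleman's model $A\cong M_z$ on $D_\mu$, the complete Nevanlinna--Pick property of $k_\mu$ from \cite{S1}, the Davidson--Hamilton property $(\mathbb A_1(1))$ for $\mathscr M(D_\mu)$ together with Lemma \ref{multiplier-reflexivity}, and the bookkeeping of Lemma \ref{dirct-sum}. One assertion needs to be flagged: you claim that the CNP property yields $\mathscr A_A=\mathscr M(D_\mu)$, i.e.\ that the weak*-closed algebra generated by $M_z$ is the full multiplier algebra. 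The paper neither proves nor uses this, and it is not an automatic consequence of the CNP property for a general finite positive measure $\mu$ on $\overline{\D}$; the paper's discussion after the proposition invokes \cite[Lemma 5.4]{RS} for the weak*-density of polynomials in $\mathcal M(D_\mu)$ only in the special case $\supp\mu\subseteq\partial\D$. Fortunately you do not need this identity: the paper's route is to apply Lemma \ref{dirct-sum}(i) to $\mathscr M(D_\mu)$ itself (reflexive with $(\mathbb A_1(1))$, hence super-reflexive), so that the unital WOT-closed subalgebra $\mathscr W_{M_z}$ is reflexive; the facts $\mathscr W_A=\mathscr A_A$ and that $\mathscr A_A$ has $(\mathbb A_1(1))$ then come from Lemma \ref{dirct-sum}(ii) and (iv). With that substitution, everything downstream in your argument --- in particular $\mbox{AlgLat}\,A=\mathscr A_A$ --- survives.

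Your gluing step is different from, and more explicit than, the paper's. Where the paper says one may imitate the proof of \cite[VII, Theorem 8.5, Case 2]{CO}, you observe that $\H_u,\H_a\in\mbox{Lat}\,T$ forces every $S\in\mbox{AlgLat}\,T$ to be block-diagonal with $S|_{\H_u}\in\mbox{AlgLat}\,U$ and $S|_{\H_a}\in\mbox{AlgLat}\,A$, so that $\mbox{AlgLat}\,T\subseteq\mathscr A_U\oplus\mathscr A_A$; the latter has $(\mathbb A_1(1))$ by Lemma \ref{dirct-sum}(iii), hence $\mbox{AlgLat}\,T$ has $(\mathbb A_1)$ by Lemma \ref{dirct-sum}(ii), and Lemma \ref{dirct-sum}(i) gives super-reflexivity of $\mathscr W_T$. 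This is in substance the argument behind Conway's Case 2, and writing it out is a gain in self-containedness. Your derivation of $(\mathbb A_1(1))$ for $\mathscr A_T$ via (iii) and (ii) matches the paper's.
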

\begin{proof}
Since $T$ is  completely hyperexpansive operator, hence 
by \cite[Theorem 3.6]{S}, $T$ admits the following Wold-type decomposition
$$T=T_1 \oplus T_2 \in B(\H_1\oplus \H_2),$$
where $\H_1= \cap_{n \in \mathbb N} T^n(\H)$ and $\H_2 = \bigvee\{T^n x: x\in \mathcal N, n \in \mathbb N \cup \{0\}\}$, with $ \mathcal N:=\mbox{ker}\, T^*$.
Further, $T_1=T|_{\H_1}$ is unitary and $T_2=T|_{\H_2}$ is analytic. Note that, by \cite[Theorem 1]{R}, $T_2$ is cyclic as $\dim \mathcal N=1$ and hence by a theorem of Aleman \cite[IV, Theorem 2.5]{A}, $T_2$ is unitarily equivalent to $M_z$ on $D_{\mu}$ for some finite positive measure $\mu$ on $\overline{\D}$. Furthermore, from \cite[Theorem 1.1]{S1}, it follows that the scalar-valued reproducing kernel $k_{\mu}$ associated with $D_{\mu}$ is
complete Nevanlinna-Pick. Thus, by Proposition \ref{dirct-sum}(v), the algebra $\mathscr M(D_{\mu})$ has property $(\mathbb A_1(1))$ and  $\mathscr M(D_{\mu})$ is reflexive by Lemma \ref{multiplier-reflexivity}. Therefore, by Proposition \ref{dirct-sum}(i), $\mathscr M(D_{\mu})$ is super-reflexive, that is, every unital WOT-closed
subalgebra of $\mathscr M(D_{\mu})$ is reflexive. In particular $\wot{M_z}$, and hence $T_2$, is reflexive. Also, $\mathscr A_{T_2}$ has property $(\mathbb A_1(1))$ by Proposition \ref{dirct-sum}(ii). On the other hand, $T_1$, being a unitary operator, is reflexive and  $\mathscr A_{T_1}$ has property $(\mathbb A_1(1))$ (see the discussion prior to Proposition \ref{complete-hyp-ref}). Since $T_1$ and $T_2$ are reflexive and both $\mathscr A_{T_1}$, $\mathscr A_{T_2}$ have property $(\mathbb A_1(1))$, one may now imitate the proof of \cite[VII, Theorem 8.5, Case 2]{CO} to conclude the reflexivity of $\mathscr A_{T}$, and hence of $T$. Moreover, $\mathscr A_{T}$ has property $(\mathbb A_1(1))$ since each $\mathscr A_{T_i}$ ($i=1, 2)$ has property $(\mathbb A_1(1))$ (see Proposition \ref{dirct-sum}(iii)).
\end{proof}

One part of the proof of reflexivity of a cyclic analytic $2$-isometry $T\in B(\H)$ is implicit in \cite{RS}. Indeed, any such $T$ is unitarily equivalent to $M_z$  acting on $D_{\mu}$ for some finite positive measure $\mu$ supported on $\partial\D$ (see \cite[Theorem 5.1]{R1}). It follows from \cite[Lemma 5.4]{RS} that for any multiplier $\phi \in \mathcal{M}(D_{\mu})$ the multiplication operator $M_{\phi} \in \wot{M_z}$. One can also verify that $\mbox{AlgLat}\,\wot{M_z} \subseteq  \mathscr M(D_{\mu})$ (see, for instance, \cite[Proof of Theorem 4.1]{CPP}) and thus $M_z$ is reflexive. The novelty of our result is that it removes the analyticity assumption and at the same time ensures the property $(\mathbb A_1(1))$ (cf. \cite[Theorem 4]{FH}).

\begin{corollary}\label{2-iso-ref}
If $T$ is a cyclic $2$-isometry in $B(\H)$, then $T$ is reflexive and has property $(\mathbb A_1(1))$.
\end{corollary}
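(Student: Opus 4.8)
The plan is to derive the corollary directly from Proposition \ref{complete-hyp-ref}, so the entire task reduces to checking its hypotheses, possibly after isolating one degenerate case. First I would recall that every $2$-isometry is completely hyperexpansive (by \cite[Remark 1.3]{SA}), and that a $2$-isometry $T$ is expansive, hence bounded below; in particular $T$ is injective with closed range, so that $\overline{T(\H)} = T(\H)$ and $\ker T^* = \H \ominus T(\H)$.

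Next I would bound the dimension of the cokernel using cyclicity. Let $h$ be a cyclic vector for $T$ and let $P$ be the orthogonal projection of $\H$ onto $\ker T^*$. Since $T^n h \in T(\H)$ for every $n \ge 1$, we have $P T^n h = 0$ for all $n \ge 1$, and therefore applying the bounded operator $P$ to the identity $\H = \bigvee\{T^n h : n \ge 0\}$ yields $\ker T^* = P(\H) \subseteq \overline{\mathrm{span}}\{Ph\}$. Hence $\dim \ker T^* \le 1$, and I would split the argument into the two cases $\dim \ker T^* = 1$ and $\dim \ker T^* = 0$.

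If $\dim \ker T^* = 1$, then $T$ is a completely hyperexpansive operator with $\dim \ker T^* = 1$, so Proposition \ref{complete-hyp-ref} applies verbatim and gives both the reflexivity of $T$ and property $(\mathbb{A}_1(1))$. This is the ``main'' case, and it is immediate once the reduction is in place.

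The remaining case $\dim \ker T^* = 0$ is the one step that requires a genuine (if short) argument, and I expect it to be the main obstacle. Here $T$ is onto and bounded below, hence invertible, and I would show that an invertible $2$-isometry is unitary. For a fixed $x$, the relation $B_2(T)=0$ forces the second differences of $s_n := \|T^n x\|^2$ to vanish, so $n \mapsto s_n$ is affine for $n \ge 0$; applying the same relation to $T^{-k}x$ propagates this affine law to negative indices, giving $\|T^{-k}x\|^2 = \|x\|^2 - k\,(\|Tx\|^2 - \|x\|^2)$ for all $k \ge 0$. Since the left-hand side is nonnegative while $\|Tx\|^2 - \|x\|^2 \ge 0$ by expansiveness, letting $k \to \infty$ forces $\|Tx\| = \|x\|$. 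Thus $T$ is an onto isometry, i.e.\ unitary, and a unitary operator is reflexive and has property $(\mathbb{A}_1(1))$ (as recorded in the discussion prior to Proposition \ref{complete-hyp-ref}). The two cases together complete the proof.
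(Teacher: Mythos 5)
Your proof is correct and follows the route the paper intends: reduce to Proposition \ref{complete-hyp-ref} via the facts that every $2$-isometry is completely hyperexpansive and that cyclicity forces $\dim\ker T^*\le 1$, then dispose of the invertible case by showing an invertible $2$-isometry is unitary (the paper would simply cite \cite[Remark 3.4]{SA} here, but your direct second-difference argument is fine). You have in fact supplied the two details the paper leaves implicit, including the genuinely necessary $\dim\ker T^*=0$ case (e.g.\ the bilateral shift), so nothing is missing.
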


We discuss here one instance in which the reflexivity of (not necessarily cyclic) completely hyperexpansive operators can be ensured. Let $T \in B(\H)$ be a finitely multicyclic completely non-unitary $2$-isometry satisfying the kernel condition, that is, $T^*T(\mbox{ker}\, T^*) \subseteq \mbox{ker}\, T^*$. It follows from \cite[Corollary 3.7]{ACJS2} that $T$ is unitarily equivalent to an orthogonal sum of $n$ unilateral weighted shifts $T_i$, where $n$ is the order of multicyclity of $T$. Since $T_i$'s are cyclic $2$-isometries, each $T_i$ is reflexive and has property $(\mathbb A_1(1))$ (Corollary \ref{2-iso-ref}). Then applying \cite[Theorem 3.6]{S} and using the fact that
any analytic operator is completely non-unitary, we can show that any finitely multicyclic $2$-isometry satisfying the kernel condition is reflexive.

The preceding discussion together with Proposition \ref{complete-hyp-ref} motivates us to the following question:
\begin{question}
Is every completely hyperexpansion in $B(\H)$ reflexive?
\end{question}

\section{Cauchy dual operators}
The main result of this section establishes the reflexivity of the Cauchy dual of certain expansive operators.
Before proceeding to the main result, we recall the following notion. Let $H^{\infty}(\mathbb D)$ be the algebra of all bounded holomorphic functions on $\mathbb D$. A subset  $\Omega$ of $\mathbb C$ is {\it dominating for the algebra $H^{\infty}(\mathbb D)$}  if
$$ \sup_{\lambda \in \D} |f(\lambda)| = \sup_{\lambda \in \Omega\, \cap\, \mathbb D }|f (\lambda)|, \quad
 f \in H^{\infty}(\mathbb D).$$

\begin{proposition}\label{cauchy-dual-ref}
Let $T$ be a $2$-hyperexpansive operator in $B(\H)$ and $T'$ be the Cauchy dual of $T$. Then $T'$ is reflexive and has property $(\mathbb A_1(1))$.
\end{proposition}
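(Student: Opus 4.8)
The plan is to realise $T'$ as a contraction whose spectrum fills the open unit disc, and then to feed it into the dual-algebra theory of absolutely continuous contractions with dominating spectrum.

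First I would record the elementary consequences of $2$-hyperexpansivity. From $B_1(T)\Le 0$ one gets $T^{*}T\Ge I_{\H}$, and then inductively $T^{*n}T^{n}\Ge I_{\H}$, so $\|T^{n}x\|\Ge\|x\|$ and in particular $a_n:=\|T^{n}x\|^{2}$ is increasing; thus $T$ is expansive. Since $T'^{*}T'=(T^{*}T)^{-1}\Le I_{\H}$, the Cauchy dual $T'$ is a contraction, so $\sigma(T')\subseteq\overline{\D}$. From $B_2(T)\Le 0$ I would note that for each $x\in\H$ the sequence $a_n$ is concave, i.e.\ $a_{n+1}-2a_n+a_{n-1}\Le 0$; hence its first differences are non-increasing and $a_n\Le a_0+n(a_1-a_0)$, so $\|T^{n}\|=O(\sqrt{n})$ and $r(T)=1$. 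In particular $\sigma(T)\subseteq\overline{\D}$.

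The crux is to show $\sigma(T')\supseteq\D$. If $\ker T^{*}=\{0\}$, then $T$, being bounded below with dense range, is invertible; the concavity of $a_n$ is then valid for all $n\in\mathbb Z$, and a concave sequence on $\mathbb Z$ that is bounded below must be constant, so $\|Tx\|=\|x\|$ for every $x$ and $T$ is unitary. In that case $T'=T$ is normal and the conclusion is immediate. Otherwise pick $0\neq w\in\ker T^{*}$ and $\lambda\in\D\setminus\{0\}$. Since $|1/\overline\lambda|>1$ and $\sigma(T)\subseteq\overline{\D}$, we have $\overline\lambda^{-1}\notin\sigma(T)$, so $I-\overline\lambda T=-\overline\lambda\,(T-\overline\lambda^{-1}I_{\H})$ is invertible; put $x:=(I-\overline\lambda T)^{-1}w\neq 0$. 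Applying $T^{*}$ to $x=w+\overline\lambda\,Tx$ and using $T^{*}w=0$ gives $T^{*}x=\overline\lambda\,T^{*}Tx$, hence $T'^{*}x=(T^{*}T)^{-1}T^{*}x=\overline\lambda\,x$. Together with $T'^{*}w=0$ (the case $\lambda=0$), this yields $\D=\{\overline\lambda:\lambda\in\D\}\subseteq\sigma_p(T'^{*})$, so $\sigma(T')\supseteq\D$. As $T'$ is a contraction, $\sigma(T')\cap\D=\D$, which is trivially dominating for $H^{\infty}(\D)$.

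To finish I would split $T'$ through its canonical decomposition $T'=U\oplus C$ into a unitary part $U$ and a completely non-unitary contraction $C$. The eigenvectors constructed above have eigenvalues $\overline\lambda$ with $|\lambda|<1$, so they have no component in $U$ (whose spectrum lies on $\T$); hence $\sigma(C)\supseteq\D$ is dominating. Being completely non-unitary, $C$ is absolutely continuous, and the dual-algebra theory of \cite{BFP} then gives that $C$ has property $(\mathbb A_1(1))$ and is reflexive; the unitary part $U$ is normal, so it is likewise reflexive and has property $(\mathbb A_1(1))$. With both summands reflexive and enjoying $(\mathbb A_1(1))$, Lemma \ref{dirct-sum}(iii) shows $\mathscr A_{T'}$ has property $(\mathbb A_1(1))$, and imitating \cite[VII, Theorem 8.5]{CO} exactly as in the proof of Proposition \ref{complete-hyp-ref} yields the reflexivity of $T'$. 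The step I expect to require the most care is this last invocation of \cite{BFP}: one must make sure the dominating spectrum is genuinely attached to the \emph{absolutely continuous} part of $T'$ and that the constant in $(\mathbb A_1(1))$ is really $1$; the spectral inclusion $\sigma(T')\supseteq\D$, by contrast, is the concrete engine driving the argument.
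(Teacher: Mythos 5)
Your proof is correct, and its overall strategy coincides with the paper's: realize $T'$ as a contraction whose spectrum contains $\D$, decompose it, and invoke the dual-algebra machinery ([BFP, Proposition 4.6], Brown--Chevreau for reflexivity, and Bercovici's theorem for the sharp constant in $(\mathbb A_1(1))$ --- the last point you rightly flag as needing [B, Theorem 1.2] rather than [BFP] alone). Where you genuinely diverge is in making the two key inputs self-contained. The paper simply cites \cite[Remark 3.4]{SA} for the fact that an invertible $2$-hyperexpansion is unitary, and \cite[Lemma 2.14(ii)]{C} for $\sigma(T')=\overline{\D}$ when $T$ is not invertible; you instead prove the first via concavity of $n\mapsto\|T^nx\|^2$ extended over $\mathbb Z$, and the second by exhibiting, for each $\lambda\in\D$, the explicit eigenvector $x=(I-\overline\lambda T)^{-1}w$ of $T'^*$ with $w\in\ker T^*$. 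The eigenvector construction buys more than the citation does: it localizes the dominating spectrum in the completely non-unitary summand directly (the eigenvalue $\overline\lambda$ with $|\lambda|<1$ forces the unitary component of $x$ to vanish), whereas the paper passes from $\sigma(T')=\overline{\D}$ to $\sigma(T'_a)=\overline{\D}$ using that the singular-unitary summand has spectrum in $\T$. Two further minor variations: you use the unitary/c.n.u.\ decomposition $T'=U\oplus C$ rather than the paper's absolutely continuous/singular-unitary splitting (both work, since a c.n.u.\ contraction is absolutely continuous and the a.c.\ unitary part is handled by the normal-operator facts either way), and you assemble reflexivity of the direct sum by the $(\mathbb A_1(1))$ argument of \cite[VII, Theorem 8.5]{CO} as in Proposition \ref{complete-hyp-ref}, whereas the paper shortcuts this step with \cite[Lemma 7.1]{CEP} (a contraction is reflexive iff its absolutely continuous part is). Net effect: your argument is longer but essentially citation-free on the spectral side, and it would survive in settings where Chavan's lemma is not available verbatim.
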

\begin{proof}
If $T$ is invertible, then $T$ is a unitary \cite[Remark 3.4]{SA}. In that case, $T'=T$ and the result follows. So, we assume that $T$ is not invertible. Since $T$ is expansive, $T'$ is a contraction. Let us recall that $T'$ admits canonical decomposition
$$T'=T'_a \oplus T'_s \in B(\H_1\oplus \H_2),$$
where $T'_a=T'|_{ \H_1}$ is an absolutely continuous contraction and $T'_s=T'|_{\H_2}$ is a singular unitary operator. It is well known that a contraction is reflexive if and only if its absolutely continuous contractive part is reflexive (see, for example, \cite[Lemma 7.1]{CEP}). We now establish the reflexivity of $T'_a$. As $T'_a$ is an absolutely continuous contraction, $T'_a$ has (Sz.-Nagy-Foias) $H^{\infty}$-functional calculus $\Phi_{T'_a}:H^{\infty}(\mathbb D) \rightarrow \mathscr A_{T'_a}$ (see \cite[Theorem 4.1]{BFP}). Since $T$ is not invertible, $\sigma(T')=\overline{\mathbb D}$ by \cite[Lemma 2.14(ii)]{C} and thus $\sigma(T'_a)=\overline{\mathbb D }$. Therefore, $\sigma(T'_a)\cap \mathbb D$ is dominating for $H^{\infty}(\mathbb D)$. It follows then from \cite[Proposition 4.6]{BFP} that the $H^{\infty}$-functional calculus $\Phi_{T'_a}$ is an isometric isomorphism and a weak* homeomorphism. Hence $T'_a$ is reflexive by \cite{BC}.

By \cite[Theorem 1.2]{B}, $\mathscr A_{T'_a}$ satisfies property $(\mathbb A_1(1))$. Since $T'_s$ is a unitary operator, $\mathscr A_{T'_s}$ has property $(\mathbb A_1(1))$ (see the discussion prior to Proposition \ref{complete-hyp-ref}). Therefore, by Proposition \ref{dirct-sum}(iii), $\mathscr A_{T'}$ has property $(\mathbb A_1(1))$. This completes the proof.
\end{proof}

\begin{remark}
The reflexivity of $T'_a$ can also be obtained from \cite[Theorem 5]{FH}. There the proof is based on the fact that every von Neumann operator is reflexive.
\end{remark}

Bergman-type operators were studied by Shimorin \cite{S} in the context of wandering subspace problem. As an immediate application of the above proposition, we now establish the reflexivity of such operators.

\begin{corollary}\label{coro2}
Let $T \in B(\H)$ be a Bergman-type operator. Then $T$ is reflexive and has property $(\mathbb A_1(1))$.
\end{corollary}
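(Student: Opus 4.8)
The plan is to realize an arbitrary Bergman-type operator as the Cauchy dual of a $2$-hyperexpansive operator, so that Proposition \ref{cauchy-dual-ref} applies directly. Concretely, given a Bergman-type $T$, I would pass to its Cauchy dual $T'$ and show that $T'$ is itself $2$-hyperexpansive. Since $(T')'=T$ by the Cauchy dual identities, Proposition \ref{cauchy-dual-ref}, applied with $T'$ in the role of the $2$-hyperexpansive operator, would then yield that $T=(T')'$ is reflexive and has property $(\mathbb A_1(1))$. Note that $T$ is left-invertible by the definition of a Bergman-type operator, so $T'$ is well defined.

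The heart of the argument is the claim that $B_2(T')\Le 0$. Write $A:=T^*T$, which is positive and invertible, so that $T'=TA^{-1}$ and $T'^*T'=A^{-1}$. A direct computation gives $T'^{*2}T'^2=T'^*(T'^*T')T'=A^{-1}T^*A^{-1}TA^{-1}$, whence $B_2(T')=I_{\H}-2A^{-1}+A^{-1}T^*A^{-1}TA^{-1}$. Conjugating by the self-adjoint invertible operator $A$, which preserves the sign of a self-adjoint operator, shows that $B_2(T')\Le 0$ is equivalent to $A^2-2A+T^*A^{-1}T\Le 0$, that is, to $T^*A^{-1}T\Le 2A-A^2$.

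The main obstacle, and really the only nontrivial point, is to extract exactly this last inequality from the defining condition $TT^*+A^{-1}\Le 2I_{\H}$. I would do this by rewriting the hypothesis as $TT^*\Le 2I_{\H}-A^{-1}$ and conjugating both sides by $T$: since $T^*(TT^*)T=A^2$ and $T^*(2I_{\H}-A^{-1})T=2A-T^*A^{-1}T$, conjugation (which preserves operator order) yields $A^2\Le 2A-T^*A^{-1}T$, i.e.\ precisely $T^*A^{-1}T\Le 2A-A^2$. Hence $B_2(T')\Le 0$. It is worth observing that this computation alone gives only $A\Le 2I_{\H}$, not $A\Le I_{\H}$, so the expansivity $B_1(T')\Le 0$ is not immediate at this stage and must be obtained separately.

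Finally, Richter's lemma \cite[Lemma 1]{R}, namely that $B_2(S)\Le 0$ forces $B_1(S)\Le 0$ for any $S$, applied to $S=T'$ upgrades $B_2(T')\Le 0$ to $B_1(T')\Le 0$; thus $T'$ is $2$-hyperexpansive. Proposition \ref{cauchy-dual-ref} then gives that the Cauchy dual $(T')'$ is reflexive and has property $(\mathbb A_1(1))$, and invoking the identity $(T')'=T$ completes the proof.
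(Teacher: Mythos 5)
Your proof is correct and takes essentially the same route as the paper: both realize $T$ as $(T')'$ and apply Proposition \ref{cauchy-dual-ref} once $T'$ is known to be $2$-hyperexpansive. The only difference is that the paper simply cites the proof of \cite[Theorem 3.6]{S} for the $2$-hyperexpansivity of $T'$, whereas you verify it by a direct (and correct) computation, including the careful use of Richter's lemma to upgrade $B_2(T')\Le 0$ to expansivity.
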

\begin{proof}
Suppose $T \in B(\H)$ is a Bergman-type operator. It was noted in the proof of \cite[Theorem 3.6]{S} that  $T'$  is $2$-hyperexpansive. Now by applying Proposition \ref{cauchy-dual-ref}, we deduce the reflexivity of $(T')'=T$ and property $(\mathbb A_1(1))$.
\end{proof}

\begin{remark}
Let $T$ be a $2$-hyperexpansive operator in $B(\H)$ and let $T'$ be the Cauchy dual of $T$. It was observed in the proof of \cite[Theorem 2.9]{C} that $TT'$ is similar to an isometry. Also, one can easily show that $T'T$ is similar to an isometry. Since any isometry is reflexive (see \cite{D}) and reflexivity is invariant under similarity (see \cite[Section 57, pp. 323]{CO1}), it follows that $TT'$ and $T'T$ are reflexive. Moreover, since an isometry has property $(\mathbb A_1(1))$, so by applying Proposition \ref{dirct-sum}(iv) we deduce that $TT'$ (resp. $T'T$) has property $(\mathbb A_1(r))$ (resp. $(\mathbb A_1(r'))$) for some $r\Ge 1$ (resp. $r'\Ge 1$).
\end{remark}

As discussed earlier, any $2$-isometry is $2$-hyperexpansive. Therefore by Proposition \ref{cauchy-dual-ref}, Cauchy-dual of any $2$-isometry is reflexive. It turns out that any $m$-isometry is left-invertible (see \cite[I. Lemma 1.21]{AS}). This gives rise to  a natural question, whether the Cauchy dual of any $m$-isometry is reflexive
or not. This has a negative answer for $m=3$. In this regard, we discuss below an example of a cyclic $3$-isometry $T$ from \cite[III. pp 406]{AS}, for which neither $T$ nor $T'$ is reflexive.

\begin{example}\label{example}
 Let us consider the following operator
$$ T :=
  \bigoplus_{n=1}^{\infty}\begin{bmatrix}
    \alpha_n & c \\
    0 & \alpha_n
  \end{bmatrix} ~ \mbox{ on }  \mathcal H := \bigoplus_{n=1}^{\infty} \H_{n},$$
where each $\H_{n}= \mathbb C^2$, $c>0$ and $(\alpha_n)_{n \in \mathbb N}$ is a sequence of unimodular complex numbers which do not contain any of its accumulation points. It is easy to verify that
\beqn
 T^*T = \bigoplus_{n=1}^{\infty}\begin{bmatrix}
    1 & \overline{\alpha_n}c  \\
    \alpha_n c & 1+c^2
  \end{bmatrix} ~ \mbox{ and }
 T' = \bigoplus_{n=1}^{\infty}\begin{bmatrix}
    \alpha_n & 0  \\
    -\alpha_n^2c & \alpha_n
  \end{bmatrix}.
\eeqn
Let $\{e_1,e_2\}$ be the standard basis of $\mathbb C^2$. For $j=1,2$, let $$\ x_j^n = (0,\ldots,0,e_j,0,\ldots),$$
where $e_j$ is in the $n^{{th}}$ position. Note that $\{x_j^n: j =1,2, ~\mbox{and } n \in \mathbb N\}$ is an orthonormal basis for  $\mathcal H$. A routine verification shows that
$$ \mbox{Lat}\,T = \big\{\{0\},\{ \vee \{x_{1}^n\} : n\in \mathbb N\}, \{ \vee \{x_{1}^m,x_{1}^n\} : m,n\in \mathbb N\},\cdots,\mathcal H\big\},$$
$$ \mbox{AlgLat }\mathscr W_T = \Big\{ \bigoplus_{n=1}^{\infty} \begin{bmatrix}
    p_n & q_n  \\
    0 & r_n
  \end{bmatrix} : p_n,q_n,r_n \in \mathbb C \Big\}.
$$
It is easy to show that any $A \in \wot{T}$ is of the form
\beqn
 \bigoplus_{n=1}^{\infty}\begin{bmatrix}
    t_n & q \\
    0 & t_n
  \end{bmatrix} \mbox{ for some } t_n,q \in \C.
\eeqn
Thus $\wot{T}$ is a proper subset of $\mbox{AlgLat}\,{\wot{T}}$ and hence $T$ is not reflexive. A similar computation shows that $T'$ is not reflexive.
\end{example}

Observe that the $3$-isometry considered in Example \ref{example} is neither expansive nor contractive. In the following proposition we show that the Cauchy dual $T'$ of an $m$-isometry $T$ is reflexive provided $T$ is either expansive or contractive.

\begin{proposition}\label{sameer}
Let $T\in B(\mathcal H)$ be an $m$-isometry and let $T'$ be the Cauchy dual of $T$. If $T$ is either contractive or expansive, then $T'$ is reflexive and has property $(\mathbb A_1(1))$.
\end{proposition}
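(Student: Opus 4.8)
The plan is to treat the contractive and expansive cases separately: the contractive case collapses to a trivial class, while the expansive case reproduces the functional-calculus argument of Proposition \ref{cauchy-dual-ref} once the spectrum of $T'$ is identified. The common starting point is the polynomial growth available for any $m$-isometry. Since $B_m(T)=0$ forces $B_k(T)=0$ for all $k\ge m$ (via the recursion $B_{k+1}(T)=B_k(T)-T^*B_k(T)T$), binomial inversion of \eqref{equ} gives $T^{*n}T^n=\sum_{k=0}^{m-1}(-1)^k\binom{n}{k}B_k(T)$, so that for each $x\in\mathcal H$ the map $n\mapsto\|T^nx\|^2$ agrees with a polynomial of degree at most $m-1$; for invertible $T$ one checks, by translating the base point, that this identity extends to all $n\in\mathbb Z$.

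First I would dispose of the contractive case. If $T$ is a contraction, then $0\le\|T^nx\|^2\le\|x\|^2$ for all $n$, so the polynomial above is bounded on $\mathbb{N}$ and hence constant; evaluating at $n=1$ gives $\|Tx\|=\|x\|$, so $T$ is an isometry. Then $T^*T=I$, whence $T'=T$ is an isometry, which is reflexive by \cite{D} and has property $(\mathbb A_1(1))$.

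For the expansive case, $T^*T\ge I$ gives $T'^*T'=(T^*T)^{-1}\le I$, so $T'$ is a contraction, and $(T^*T)^{-1}\ge\|T^*T\|^{-1}I$ shows $T'$ is bounded below. If $T$ is invertible, then $T^{-1}$ is a contraction (as $T$ is expansive) and is again an $m$-isometry, so by the contractive case $T^{-1}$, and hence $T$, is unitary and $T'=T$ is reflexive with property $(\mathbb A_1(1))$. So assume $T$ is not invertible; then $\ker T'^*=\ker T^*\neq\{0\}$, so $0\in\sigma(T')$, and the crucial step is to show $\sigma(T')=\overline{\mathbb D}$. Here the $m$-isometric structure enters twice. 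Since $\|T^n\|^2=\bigl\|\sum_{k=0}^{m-1}(-1)^k\binom{n}{k}B_k(T)\bigr\|$ grows at most polynomially, the spectral radius satisfies $r(T)\le1$, so $\sigma(T^*)\subseteq\overline{\mathbb D}$. Using the identity $T^*T'=I_{\mathcal H}$ from \eqref{equ}, any approximate eigenvector for $T'$ at $z\neq0$ yields, upon applying $T^*$, an approximate eigenvector for $T^*$ at $1/z$, forcing $1/z\in\sigma_{\mathrm{ap}}(T^*)\subseteq\overline{\mathbb D}$, i.e. $|z|\ge1$; as $T'$ is a contraction this gives $\sigma_{\mathrm{ap}}(T')\subseteq\partial\mathbb D$ (the value $0$ being excluded because $T'$ is bounded below). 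Since $\partial\sigma(T')\subseteq\sigma_{\mathrm{ap}}(T')\subseteq\partial\mathbb D$ and $\sigma(T')$ is a compact subset of $\overline{\mathbb D}$ containing the interior point $0$, a connectedness argument forces $\sigma(T')=\overline{\mathbb D}$ (alternatively one may invoke \cite[Lemma 2.14(ii)]{C}).

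With $\sigma(T')=\overline{\mathbb D}$ established, the remainder runs exactly as in the proof of Proposition \ref{cauchy-dual-ref}: writing the canonical decomposition $T'=T'_a\oplus T'_s$ into its absolutely continuous contractive part $T'_a$ and singular unitary part $T'_s$, the inclusion $\mathbb D\subseteq\sigma(T'_a)$ (since $\sigma(T'_s)\subseteq\partial\mathbb D$) shows $\sigma(T'_a)\cap\mathbb D$ is dominating for $H^{\infty}(\mathbb D)$; hence by \cite[Proposition 4.6]{BFP} the $H^{\infty}$-functional calculus $\Phi_{T'_a}$ is an isometric isomorphism and a weak* homeomorphism, so $T'_a$, and therefore $T'$, is reflexive by \cite{BC}. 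Property $(\mathbb A_1(1))$ follows from \cite[Theorem 1.2]{B} for $T'_a$, the corresponding property of the unitary $T'_s$, and Lemma \ref{dirct-sum}(iii). The main obstacle is precisely the spectral identity $\sigma(T')=\overline{\mathbb D}$: unlike a $2$-hyperexpansive operator, an expansive $m$-isometry with $m\ge3$ need not be $2$-hyperexpansive (the growth polynomial can be convex rather than concave), so Proposition \ref{cauchy-dual-ref} does not apply directly and the spectrum of $T'$ must instead be extracted from the $m$-isometric structure through the polynomial bound on $\|T^n\|$ and the Cauchy-dual relation $T^*T'=I_{\mathcal H}$.
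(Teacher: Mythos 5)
Your proof is correct and follows essentially the same route as the paper's: the contractive case collapses to an isometry, the invertible expansive case to a unitary, and the non-invertible expansive case reduces to showing $\sigma(T')=\overline{\mathbb D}$ and then rerunning the functional-calculus and $(\mathbb A_1(1))$ argument of Proposition \ref{cauchy-dual-ref}. The only difference is that you prove from scratch, via the polynomial growth of $\|T^nx\|^2$ and an approximate-point-spectrum/connectedness argument, the two ingredients the paper simply cites, namely \cite[Lemma 2.4]{C1} for ``contractive $m$-isometry $\Rightarrow$ isometry'' and the spectral identity $\sigma(T')=\overline{\mathbb D}$, which the paper gets in one line from the factorization $T'^*-\lambda I_{\mathcal H}=T'^*(I_{\mathcal H}-\lambda T)$.
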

\begin{proof}
If $m=1$, then $T$ is an isometry. In that case, $T'=T$ and the result follows as any isometry is reflexive (see \cite{D}) and has property $(\mathbb A_1(1))$. Let $m > 1$.

Case I. Suppose $T$ is a contraction. It follows from \cite[Lemma 2.4]{C1} that $T$ is expansive and hence $T$ is an isometry. Thus, $T$ as well as $T'$ is reflexive and both of them have property $(\mathbb A_1(1))$.

Case II. Suppose $T$ is an expansion. It is well known for any $m$-isometry $T$,
$\sigma(T) \subseteq \overline{\D}$ (see \cite[I. Lemma 1.21]{AS}). Note that for any $\lambda \in \D \setminus \{0\} $,
$$T'^* - \lambda I_{\H}\overset{\eqref{equ}}= T'^*(I_{\H} - \lambda T)=-\lambda T'^*(T-(1/\lambda) I_{\H}). $$
If $T$ is not invertible, then $T'$ is not invertible. In that case, $\overline{\D} \subseteq \sigma(T')$. In addition, the contractivity of $T'$ implies $\sigma(T')=\overline{\D}$. Now it follows from the argument
of the proof of Proposition \ref{cauchy-dual-ref} that for any non-invertible expansive $m$-isometry $T$, the Cauchy dual $T'$ is reflexive and has property $(\mathbb A_1(1))$. On the other hand, if $T$ is invertible, then $T^{-1}$ is a contractive $m$-isometry. Therefore, by Case I, $T^{-1}$ is an isometry  and hence $T$ is a unitary. This completes the proof.
\end{proof}

Although $T'$ in Proposition \ref{sameer} is reflexive and every contractive or invertible expansive, $m$-isometry $T$ is reflexive; we do not have any conclusive evidence about the reflexivity of
$T$ in general. This situation does not occur in case of finite-dimensional Hilbert spaces.

\begin{proposition}\label{finite-ref}
Let $T$ be an invertible operator acting on a finite dimensional Hilbert space $\H$. Then $T$ is reflexive if and only if $T'$ is reflexive.
\end{proposition}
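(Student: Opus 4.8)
The plan is to reduce the claim to two facts that are special to finite dimensions, after first observing that for an \emph{invertible} $T$ the Cauchy dual is simply the inverse of the adjoint. Indeed, since $T$ is invertible we have $(T^*T)^{-1}=T^{-1}(T^*)^{-1}$, and hence
$$T'=T(T^*T)^{-1}=T\,T^{-1}(T^*)^{-1}=(T^*)^{-1}.$$
Thus it suffices to show that $T$ is reflexive if and only if $(T^*)^{-1}$ is reflexive, which I would split into \emph{inversion invariance} and \emph{adjoint invariance} of reflexivity. Throughout I would use that, because $\dim\H<\infty$, the algebra $B(\H)$ is finite-dimensional, so the WOT, weak$^*$ and norm topologies coincide on it; consequently $\mathscr W_S=\{p(S):p\in\mathbb C[z]\}$ for every $S\in B(\H)$, and $\mbox{Lat}\,\mathscr W_S=\mbox{Lat}\,S$ (a subspace invariant under $S$ is automatically invariant under every polynomial in $S$). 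In particular $S$ is reflexive precisely when $\mathscr W_S=\mbox{AlgLat}\,\mathscr W_S$.

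For inversion invariance, I would invoke the Cayley--Hamilton theorem: the characteristic polynomial of an invertible $S$ has nonzero constant term $\pm\det S$, so $S^{-1}$ is a polynomial in $S$, giving $S^{-1}\in\mathscr W_S$ and hence $\mathscr W_{S^{-1}}=\mathscr W_S$. Since the two generated algebras coincide, the reflexivity equation $\mathscr W_{S^{-1}}=\mbox{AlgLat}\,\mathscr W_{S^{-1}}$ is literally the equation $\mathscr W_S=\mbox{AlgLat}\,\mathscr W_S$, so $S$ is reflexive if and only if $S^{-1}$ is. For adjoint invariance, I would use that $A\mapsto A^*$ is an involutive bijection of $B(\H)$ with $(p(S))^*=\overline p(S^*)$, whence $(\mathscr W_S)^*=\mathscr W_{S^*}$; and that the orthocomplement map $M\mapsto M^\perp$ sends $\mbox{Lat}\,S$ onto $\mbox{Lat}\,S^*$, using $AM^\perp\subseteq M^\perp\iff A^*M\subseteq M$. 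Combining these, $A\in\mbox{AlgLat}\,\mathscr W_{S^*}$ if and only if $A^*\in\mbox{AlgLat}\,\mathscr W_S$, i.e. $(\mbox{AlgLat}\,\mathscr W_S)^*=\mbox{AlgLat}\,\mathscr W_{S^*}$. Applying the involution $*$ to the reflexivity equation for $S$ therefore yields the one for $S^*$, so $S$ is reflexive if and only if $S^*$ is.

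Chaining the two invariances with the first observation then closes the argument: $T$ is reflexive $\iff T^*$ is reflexive (adjoint invariance) $\iff (T^*)^{-1}$ is reflexive (inversion invariance) $\iff T'$ is reflexive. I do not expect a genuine obstacle here; the entire content sits in the two elementary reductions, and the only essential use of finite-dimensionality is in the inversion step, where both $S^{-1}\in\mathscr W_S$ and the resulting equality of generated algebras fail in general for operators on infinite-dimensional spaces. The only points demanding a little care are the identity $\mbox{Lat}\,\mathscr W_S=\mbox{Lat}\,S$ (so that $\mbox{AlgLat}\,\mathscr W_S$ is controlled by the single operator $S$) and the clean evaluation $T'=(T^*)^{-1}$, after which all equivalences are formal.
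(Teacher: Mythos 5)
Your proof is correct, and it follows the same overall reduction as the paper: both arguments begin from the identity $T'=(T^*)^{-1}=(T^{-1})^*$ and then dispose of the adjoint and the inverse separately, with the adjoint step handled in the standard way via orthocomplements. Where you diverge is in the inversion step, and your route is the more elementary one. The paper invokes the Deddens--Fillmore characterization of reflexive operators on finite-dimensional spaces: it checks that $T$ and $T^{-1}$ have the same Jordan block structure (with eigenvalues $\lambda\mapsto 1/\lambda$) and then reads off reflexivity from the block sizes. You instead use Cayley--Hamilton to get $S^{-1}\in\mathscr W_S$ and hence $\mathscr W_{S^{-1}}=\mathscr W_S$, after which the reflexivity of $S$ and of $S^{-1}$ is literally the same equation; this avoids any structure theory and makes the proof self-contained, at the price of having to justify the (easy, and correctly handled) identifications $\mathscr W_S=\{p(S):p\in\C[z]\}$ and $\mathrm{Lat}\,\mathscr W_S=\mathrm{Lat}\,S$. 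The paper's route, by contrast, buys an explicit criterion (two largest Jordan blocks per eigenvalue differ in size by at most one) that identifies exactly which finite-dimensional operators the proposition applies to nontrivially. Your closing remark that finite-dimensionality is essential only in the inversion step is accurate: the adjoint invariance and $\mathrm{Lat}\,\mathscr W_S=\mathrm{Lat}\,S$ hold in general, whereas $S^{-1}\in\mathscr W_S$ does not.
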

\begin{proof}
Note that $T'=(T^{-1})^*.$  If $\lambda$ is an eigenvalue of $T$ corresponding to a generalized eigenvector $v$, then $1/{\lambda}$ is the  eigenvalue of $T^{-1}$ corresponding to the generalized eigenvector $v$. It is easy to verify that numbers and sizes of Jordan blocks corresponding to $\lambda$ and $1/{\lambda}$ are same for $T$ and $T^{-1}$ in their respective Jordan canonical forms. Now applying the result of Deddens and Fillmore \cite{DF} (see also \cite[Theorem 57.2]{CO1}), $T$ is reflexive if and only if $T^{-1}$ is reflexive. Since reflexivity is preserved under adjoint operation, therefore $T$ is reflexive if and only if $T'$ is reflexive.
\end{proof}

In the following proposition we give a class of left-invertible operators for which both $T$ and $T'$ are reflexive (cf. \cite[Corollary 4.5]{CPP}, \cite[Corollary 30]{DPP}). Recall that an operator $T\in B(\H)$ is said to be {\it power bounded} if sup$_{n\in \mathbb N}\|T^n\| <\infty$.

\begin{proposition}\label{power-bdd}
Let $T \in B(\H)$ be a left-invertible operator. If both $T$  and $T'$ are power bounded, then $T$ and $T'$ are
reflexive. Moreover, $T$ (resp. $T'$) has property $(\mathbb A_1(r))$ (resp. $(\mathbb A_1(r'))$) for some $r \Ge 1$ (resp. $r' \Ge 1$).
\end{proposition}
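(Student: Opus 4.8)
The plan is to deduce from the two power-boundedness hypotheses that $T$ is similar to an isometry; once this is in hand, both the reflexivity of $T$ and the property $(\mathbb A_1(r))$ follow from facts already recorded. Write $M:=\sup_{n}\|T^n\|$ and $M':=\sup_{n}\|T'^n\|$, which are finite by hypothesis.

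The first step is to extract a lower bound on the orbits of $T$. Using the identity $T'^*T=I_{\H}$ from \eqref{equ}, a one-line induction gives $T'^{*n}T^n=I_{\H}$ for every $n$, since $T'^{*n}T^n=T'^{*(n-1)}(T'^*T)T^{n-1}=T'^{*(n-1)}T^{n-1}$. Consequently, for each $x\in\H$,
\[
\|x\|=\|T'^{*n}T^n x\|\Le \|T'^n\|\,\|T^n x\|\Le M'\|T^n x\|,
\]
so that $\tfrac{1}{M'}\|x\|\Le \|T^n x\|\Le M\|x\|$ for all $n\Ge 0$; thus every orbit of $T$ stays uniformly comparable to $\|x\|$.

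The second, and main, step is a renorming argument. I would fix a Banach limit $\mathrm{LIM}$ on $\ell^{\infty}(\N)$ and define a sesquilinear form $[x,y]:=\mathrm{LIM}_n\,\inp{T^n x}{T^n y}$; the relevant sequences are bounded because $\|T^{*n}T^n\|\Le M^2$. By the estimate above, $\tfrac{1}{M'^2}\|x\|^2\Le [x,x]\Le M^2\|x\|^2$, so $[\cdot,\cdot]$ is a bounded, positive, invertible form and hence $[x,y]=\inp{Qx}{y}$ for a positive invertible $Q\in B(\H)$. Shift-invariance of $\mathrm{LIM}$ yields $[Tx,Ty]=[x,y]$, i.e. $T^*QT=Q$. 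Setting $L:=Q^{1/2}$, one checks that $V:=LTL^{-1}$ satisfies $V^*V=I_{\H}$, so $T$ is similar to the isometry $V$. The delicate point is precisely the construction of this invariant equivalent inner product: the lower orbit bound is exactly what forces $Q$ to be invertible, and the upper bound what keeps it bounded.

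Finally, since any isometry is reflexive (\cite{D}) and reflexivity is invariant under similarity (\cite[Section 57, pp. 323]{CO1}), $T$ is reflexive. As $T'$ is itself left-invertible, $(T')'=T$, and both $T'$ and $(T')'=T$ are power bounded, the same argument applied to $T'$ (now using $T^*T'=I_{\H}$ to get the lower bound) shows $T'$ is reflexive. For the last assertion, recall that an isometry has property $(\mathbb A_1(1))$; since polynomials in $T$ and $V$ intertwine through $L$ and both $\mathscr A_T$ and $L^{-1}\mathscr A_V L$ are weak*-closed, one has $\mathscr A_T=L^{-1}\mathscr A_V L$, so Lemma \ref{dirct-sum}(iv) transfers the property to $\mathscr A_T$, giving $(\mathbb A_1(r))$ for some $r\Ge 1$; the analogous similarity for $T'$ yields $(\mathbb A_1(r'))$ for some $r'\Ge 1$.
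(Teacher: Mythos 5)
Your proof is correct and follows essentially the same route as the paper: both derive the two-sided orbit estimate $\tfrac{1}{M'}\|x\|\Le\|T^nx\|\Le M\|x\|$ from $T'^{*n}T^n=I_{\H}$ and power boundedness, conclude that $T$ is similar to an isometry, and then invoke the reflexivity of isometries, similarity invariance of reflexivity, and Lemma \ref{dirct-sum}(iv) for the $(\mathbb A_1(r))$ claim. The only difference is that you carry out the similarity-to-isometry step explicitly via the standard Banach-limit renorming, whereas the paper simply cites Kubrusly's book for that implication.
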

\begin{proof}
Suppose that $T$ and $T'$ are power bounded operators. Then there exist positive numbers $K_1$ and $K_2$ such that
$\|T^n\| \Le K_1$ and  $\|T'^n\| \Le K_2$ for all $n\in \mathbb N$. Now for any $x\in \H$,  $\|T^n x\| \Le K_1\|x\|$ and $$\|x\|\overset{\eqref{equ}}=\|T'^{*n}T^nx\|\Le\|T'^{*n}\|\|T^nx\|\Le K_2\|T^nx\|.$$
Thus $(1/K_2) \|x\| \Le \|T^n x\| \Le K_1\|x\|$ for $x \in \H$. We now deduce from Proposition 1.15 and the discussion following Corollary 1.16 in \cite{K}, $T$ is similar to an isometry. Then from the reflexivity of isometry (see \cite{D}) and invariance of reflexivity under similarity, we conclude that $T$ is reflexive. Further, since an isometry has property $(\mathbb A_1(1))$, so by applying Proposition \ref{dirct-sum}(iv) we deduce that $T$ has property $(\mathbb A_1(r))$ for some $r\Ge 1$. Similarly one can show that $T'$ is reflexive and has property $(\mathbb A_1(r'))$ for some $r'\Ge 1$.
\end{proof}

For an expansive power bounded operator $T$, it is evident from \eqref{equ} $T'$ is also power bounded. Hence the following corollary is immediate from the above proposition.

\begin{corollary}\label{coro3}
Let $T \in B(\H)$ be expansive. If $T$ is power bounded, then both $T$ and $T'$ are reflexive. Moreover, $T$ (resp. $T'$) has property $(\mathbb A_1(r))$ (resp. $(\mathbb A_1(r'))$) for some $r \Ge 1$ (resp. $r' \Ge 1$).
\end{corollary}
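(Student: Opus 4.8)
The plan is to obtain Corollary \ref{coro3} as an immediate specialization of Proposition \ref{power-bdd}; the only thing that needs checking is that the hypotheses of that proposition are met, and the crux is that expansiveness of $T$ automatically forces the Cauchy dual $T'$ to be a contraction.

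First I would unwind the definition: $T$ expansive means $B_1(T) = I_{\H} - T^*T \Le 0$, that is $T^*T \Ge I_{\H}$. In particular $T^*T$ is bounded below and hence invertible, so $T$ is left-invertible and the Cauchy dual $T' = T(T^*T)^{-1}$ is well defined. Next, using the identity $T'^*T' = (T^*T)^{-1}$ recorded in \eqref{equ}, the inequality $T^*T \Ge I_{\H}$ yields $(T^*T)^{-1} \Le I_{\H}$ by the order-reversing behaviour of inversion on positive invertible operators. Hence $T'^*T' \Le I_{\H}$, so $T'$ is a contraction and $\|T'^n\| \Le \|T'\|^n \Le 1$ for every $n \in \mathbb N$; in particular $T'$ is power bounded. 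This is exactly the remark preceding the corollary, made explicit.

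Finally, combining this with the standing hypothesis that $T$ is power bounded, we see that $T$ is a left-invertible operator for which both $T$ and $T'$ are power bounded. Proposition \ref{power-bdd} then applies directly and delivers both the reflexivity of $T$ and $T'$ and the property $(\mathbb A_1(r))$ (resp. $(\mathbb A_1(r'))$) assertions, with no further work required.

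There is essentially no genuine obstacle here: the corollary is a one-line consequence once one observes that for expansive $T$ the power boundedness of $T'$ is automatic, so that the two-sided power-bounded hypothesis of Proposition \ref{power-bdd} collapses to the single assumption that $T$ itself is power bounded. The only point requiring a moment's care is the passage $T^*T \Ge I_{\H} \Rightarrow (T^*T)^{-1} \Le I_{\H}$, which is routine operator monotonicity for the inverse on the positive cone.
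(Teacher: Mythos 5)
Your argument is correct and is exactly the paper's: the text preceding the corollary notes that for expansive $T$ the identity $T'^*T'=(T^*T)^{-1}$ from \eqref{equ} makes $T'$ a contraction, hence power bounded, and the result then follows immediately from Proposition \ref{power-bdd}. You have merely spelled out the routine step $T^*T \Ge I_{\H} \Rightarrow (T^*T)^{-1} \Le I_{\H}$, which the paper leaves implicit.
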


\section{Concluding Remarks}
We conclude this note with some possible directions for further investigations. We would like to draw the reader's attention to the recent work \cite{EMZ}, where the notion of the Cauchy dual operator has been generalized to the closed range operators. It would be interesting to know counter-parts of the results in this paper for closed range
operators. Although we have obtained reflexivity of any cyclic completely hyperexpansive operator (Proposition \ref{complete-hyp-ref}), we do not know whether an arbitrary completely hyperexpansive operator is reflexive. In view of the reflexivity of the Cauchy dual of a $2$-hyperexpansion (Proposition \ref{cauchy-dual-ref}), one may explore the possibility to relate $\mbox{Lat}\,T$ with $\mbox{Lat}\,T'$, $\wot{T}$ with $\wot{T'}$ and $\mbox{AlgLat}\,{\wot{T}}$ with $\mbox{AlgLat}\,{\wot{T'}}$ for a left-invertible operator $T$. Since $(T')'=T$, these relations may play a decisive role in deriving reflexivity of an arbitrary completely hyperexpansive operator. Indeed, the inclusion of $\mathscr W_{T}$ in $\mathscr W_{T'}$ (or vice-versa) implies the reflexivity of both $T$ and $T'$.

\begin{proposition}\label{prop2}
Let $T \in B(\H)$ be left-invertible and let $T'$ be the Cauchy dual of $T$. If $\mathscr W_{T} \subseteq \mathscr W_{T'}$, then both $T$ and $T'$ are reflexive.
\end{proposition}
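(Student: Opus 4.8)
The plan is to show that the hypothesis $\mathscr W_{T}\subseteq \mathscr W_{T'}$ secretly encodes a commutation relation, and that for a Cauchy dual this relation is exactly quasinormality; the reflexivity of both operators then follows from the classical chain quasinormal $\Rightarrow$ subnormal $\Rightarrow$ reflexive, the last step being the Olin--Thompson theorem \cite{OT} already quoted in the introduction.

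First I would observe that $\mathscr W_{T'}$, being the WOT-closed subalgebra generated by the single operator $T'$ and $I_{\H}$, is abelian: polynomials in $T'$ commute, and commutation with a fixed operator survives passage to WOT limits. Since by hypothesis $T \in \mathscr W_{T}\subseteq \mathscr W_{T'}$ while also $T' \in \mathscr W_{T'}$, both $T$ and $T'$ lie in one abelian algebra, whence $TT'=T'T$.

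Next I would unwind this into a statement about $T$ alone. Substituting $T'=T(T^*T)^{-1}$ into $TT'=T'T$ gives
\[
T^{2}(T^*T)^{-1}=T(T^*T)^{-1}T.
\]
As $T$ is left-invertible, hence injective, the leading factor $T$ may be cancelled, yielding $T(T^*T)^{-1}=(T^*T)^{-1}T$; equivalently $T$ commutes with $T^*T$, i.e. $T$ is quasinormal. Every quasinormal operator is subnormal, and every subnormal operator is reflexive by \cite{OT}, so $T$ is reflexive.

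It remains to handle $T'$, which I would do by the same mechanism rather than by appealing to symmetry, since the hypothesis is not symmetric in $T$ and $T'$. Writing $P:=T^*T$, quasinormality gives $TP=PT$ and hence $T'=TP^{-1}=P^{-1}T$; combined with the Cauchy-dual identity $T'^*T'=P^{-1}$ a direct computation gives $T'(T'^*T')=TP^{-2}=(T'^*T')T'$, so $T'$ is quasinormal as well, hence subnormal and reflexive. The crux of the whole argument---and the only genuinely nonobvious step---is the opening reduction: realizing that the algebra inclusion is equivalent to $TT'=T'T$ and that this commutation is precisely quasinormality. After that, everything is standard structure theory together with the cited reflexivity of subnormal operators, the only routine checks being the injectivity cancellation and the verification that quasinormality is inherited by $T'$.
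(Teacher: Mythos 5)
Your proposal is correct and follows essentially the same route as the paper: deduce $TT'=T'T$ from the algebra inclusion, convert this to quasinormality of $T$ (the paper left-multiplies by $T'^*$ where you cancel the injective factor $T$; both are valid), and then pass quasinormality to $T'$. The only cosmetic difference is that you reach reflexivity via quasinormal $\Rightarrow$ subnormal $\Rightarrow$ reflexive (Olin--Thompson), whereas the paper cites Wogen's theorem that quasinormal operators are reflexive directly.
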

\begin{proof}
Assume that $\mathscr W_{T} \subseteq \mathscr W_{T'}$, then $TT'=T'T$. Now multiplying both sides from left by $T'^*$ gives $T'\overset{\eqref{equ}}=(T^*T)^{-1}T$. Hence $(T^*T)T=T(T^*T)$, that is, $T$ is quasinormal and by \cite[Theorem 2]{W} $T$ is reflexive. It is easy to verify that $T$ is quasinormal if and only if $T'$ is
quasinormal. This completes the proof.
\end{proof}

Note that the converse of the above proposition is not true in general. Indeed,  such examples  are ample in the class of weighted unilateral shifts  operators (denoted by $S_{w}$) on  $l^2(\mathbb N).$ As noted in the proof if $\mathscr W_{S_{w}} \subseteq \mathscr W_{S_{w}'}$ then $S_{w}$ is necessarily quasinormal. But it is easy to verify from the definition that a weighted shift is quasinormal if and only if it is a constant multiple of the (unweighted) unilateral shift. Moreover in this case $\mathscr W_{S_{w}} = \mathscr W_{S'_{w}}.$ But the class of reflexive weighted shifts is strictly bigger than unilateral shifts (see \cite{FH} for a large class of examples). 

Recently, reflexivity of certain classes of weighted shifts on directed trees was studied in \cite{BDPP, DPP}. A result in \cite{DPP} suggests  another class of left-invertible operators $T$, for which both $T$ and $T'$ are reflexive. Indeed, it follows from \cite[Corollary 30]{DPP}, for a left-invertible weighted shift $T$ on rooted
directed tree, both $T$ and $T'$ are reflexive. In view of Propositions \ref{finite-ref}, \ref{power-bdd}
and the discussion above, one may ask whether the reflexivity of a left-invertible operator $T$ implies the reflexivity of the Cauchy dual $T'$. Nevertheless, it is interesting to study the class of left-invertible operators for which reflexivity is preserved under Cauchy dual operation.

\subsection*{Acknowledgements}
A part of this paper was written while the first author visited the Department of Mathematics and Statistics, IIT Kanpur. He expresses his gratitude to the faculty and the administration of this unit for their warm hospitality. The authors would like to thank Sameer Chavan for his continual support and  encouragement throughout the preparation of this paper. In particular, Case II in Proposition \ref{sameer} was pointed out by him.


\normalsize

\end{document}